\numberwithin{equation}{section}
\newtheorem{theorem}{Theorem}[section]
\newtheorem{proposition}[theorem]{Proposition}
\newtheorem{lemma}[theorem]{Lemma}
\newtheorem{corollary}[theorem]{Corollary}
\theoremstyle{definition}
\newtheorem{condition}[theorem]{Condition}
\theoremstyle{remark}
\newtheorem{remark}[theorem]{Remark}
\newtheorem{claim}[theorem]{Claim}
\newcommand{\Z}{\mathbb{Z}}
\newcommand{\Q}{\mathbb{Q}}
\newcommand{\C}{\mathbb{C}}
\newcommand{\proj}{{\mathbb P}}
\newcommand{\Fgn}{\mathcal{F}_{g,n}}
\newcommand{\Fg}{\mathcal{F}_{g}}
\newcommand{\Fgncpt}{\bar{\mathcal{F}}_{g,n}}
\newcommand{\G}{\Gamma}
\newcommand{\Gg}{\Gamma_{g}}
\begin{document}

%%%%%%% Title %%%%%%%%%%%%%%%%%%%%%%%%
\title[]{Differential forms on universal $K3$ surfaces}
\author[]{Shouhei Ma}
\thanks{Supported by KAKENHI 21H00971 and 20H00112.} 
\address{Department~of~Mathematics, Tokyo~Institute~of~Technology, Tokyo 152-8551, Japan}
\email{ma@math.titech.ac.jp}
\subjclass[2020]{14J28, 14J15, 11F55}
%\keywords{} 
%\dedicatory{}

\begin{abstract}
We give a vanishing and classification result for holomorphic differential forms 
on smooth projective models of the moduli spaces of pointed $K3$ surfaces. 
We prove that there is no nonzero holomorphic $k$-form for $0<k<10$ and for even $k>19$. 
In the remaining cases, we give an isomorphism between the space of holomorphic $k$-forms 
with that of vector-valued modular forms ($10\leq k \leq 18$) 
or scalar-valued cusp forms (odd $k\geq 19$) for the modular group. 
These results are in fact proved in the generality of lattice-polarization. 
\end{abstract} 

\maketitle

\section{Introduction}\label{sec: intro}

Let ${\Fgn}$ be the moduli space of $n$-pointed $K3$ surfaces of genus $g>2$, i.e., 
primitively polarized of degree $2g-2$. 
It is a quasi-projective variety of dimension $19+2n$ with a natural morphism ${\Fgn}\to {\Fg}$ 
to the moduli space ${\Fg}$ of $K3$ surfaces of genus $g$, which is generically a $K3^{n}$-fibration. 
In this paper we study holomorphic differential forms on a smooth projective model of ${\Fgn}$. 
They do not depend on the choice of a smooth projective model, 
and thus are fundamental birational invariants of ${\Fgn}$. 
We prove a vanishing result for about half numbers of degrees, 
and for the remaining degrees give a correspondence with modular forms on the period domain. 

Our main result is stated as follows. 

\begin{theorem}\label{thm: main}
Let ${\Fgncpt}$ be a smooth projective model of ${\Fgn}$ with $g>2$.  
Then we have a natural isomorphism 
\begin{equation}\label{eqn: main}
H^{0}({\Fgncpt}, \Omega^{k}) \simeq 
\begin{cases}
0 & \: \: 0<k\leq 9 \\ 
M_{\wedge^{k},k}({\Gg}) & \: \: 10 \leq k \leq 18 \\ 
0 & \: \: k>19, \: k\in 2{\Z} \\ 
S\!_{19+m}({\Gg}, \det)\otimes {\C}\mathcal{S}_{n,m} & \: \: k=19+2m, \: 0\leq m \leq n 
\end{cases}
\end{equation}
\end{theorem}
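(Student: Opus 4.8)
\emph{Strategy.} The plan is to push the computation down to the $19$-dimensional base $\Fg=\Gg\backslash\mathcal{D}$, where $\mathcal{D}$ is the Type~IV period domain of the polarizing lattice of signature $(2,19)$, and to use the dictionary between holomorphic forms and modular forms. On $\mathcal{D}$ one has $\Omega^{1}_{\Fg}\cong\mathcal{L}\otimes\mathcal{W}$, where $\mathcal{L}$ is the weight-one Hodge (automorphic) line bundle and $\mathcal{W}\cong\mathcal{L}^{\perp}/\mathcal{L}$ is the weight-zero bundle carrying the standard representation; taking exterior powers gives $\Omega^{p}_{\Fg}\cong\mathcal{L}^{\otimes p}\otimes\wedge^{p}\mathcal{W}$. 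Hence $\Gg$-invariant sections of $\Omega^{p}_{\Fg}\otimes\mathcal{L}^{\otimes m}$ are modular forms of weight $p+m$ valued in $\wedge^{p}\mathcal{W}$; for $m=0$ this is $M_{\wedge^{p},p}(\Gg)$, and for $p=19$, where $\wedge^{19}\mathcal{W}=\det$, it is a scalar form of weight $19+m$ with character $\det$. Every target space in \eqref{eqn: main} will arise this way, so the problem becomes to decide which such spaces occur and with which boundary conditions.

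\emph{The fibration.} I would next exploit $\pi\colon\Fgn\to\Fg$, whose general fibre over $[X]$ is $X^{n}$, and the fact that $H^{0}(\Omega^{k})$ is a birational invariant of smooth projective varieties, so I may work on any convenient smooth model dominating a fibrewise compactification of $X^{n}$ over a toroidal compactification $\bar{\mathcal{F}}_{g}$ of $\Fg$. Filtering $\Omega^{k}_{\Fgn}$ by the power of $\pi^{*}\Omega^{1}_{\Fg}$ it contains, the graded pieces are $\pi^{*}\Omega^{p}_{\Fg}\otimes\Omega^{k-p}_{\Fgn/\Fg}$; applying $\pi_{*}$ and using $H^{0}(\Fgn,-)=H^{0}(\Fg,\pi_{*}-)$ reduces everything to the relative sheaves $\pi_{*}\Omega^{q}_{\Fgn/\Fg}$. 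A K\"unneth computation on $X^{n}$, where $h^{1,0}(K3)=0$ and $h^{2,0}(K3)=1$ with the period $\sigma$ spanning the $(2,0)$-line, shows that these vanish for odd $q$ and equal $\mathcal{L}^{\otimes m}\otimes\C\mathcal{S}_{n,m}$ for $q=2m$, the factor $\C\mathcal{S}_{n,m}$ recording the choice of the $m$ factors that contribute a $2$-form. Thus the associated graded of $\pi_{*}\Omega^{k}_{\Fgn}$ is assembled from $\Omega^{p}_{\Fg}\otimes\mathcal{L}^{\otimes m}\otimes\C\mathcal{S}_{n,m}$ with $p+2m=k$.

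\emph{The heart: collapse to the edges.} This associated graded is far larger than $H^{0}(\Omega^{k})$: the filtration does not split, and the connecting maps are, to leading order, cup product with the Kodaira--Spencer class $\kappa$ along the Gauss--Manin connection. The decisive input is that the period map is an immersion (local Torelli), so that the Gauss--Manin derivative $\bar{\nabla}\sigma$ is nonzero---indeed $\bar{\nabla}$ is exactly the isomorphism underlying $\Omega^{1}_{\Fg}\cong\mathcal{L}\otimes\mathcal{W}$. Consequently any class carrying at least one vertical $2$-form $\sigma_{i}$ has nonzero image under the connecting map, \emph{unless} it is wedged with the full $\Omega^{19}_{\Fg}$, in which case the next derivative lands in $\Omega^{20}_{\Fg}=0$ and cannot kill it. The only graded pieces surviving into $H^{0}(\Omega^{k})$ are therefore the two edges: the pure base term $(p,m)=(k,0)$, which sits at the bottom of the filtration as $\pi^{*}\Omega^{k}_{\Fg}$ and survives because the Gauss--Manin derivative of a degree-zero vertical class vanishes; and the top base term $p=19$, for all $m$. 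I expect this collapse to be the main obstacle: one must prove that the iterated cup products with $\kappa$ are genuinely injective on the exact bundles $\wedge^{p}\mathcal{W}\otimes\mathcal{L}^{\otimes m}$ that occur for $0<p<19$, and that this persists after passing to the projective model, i.e.\ that the boundary contributes nothing spurious.

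\emph{Evaluation and assembly.} It remains to evaluate the two surviving families on $\bar{\mathcal{F}}_{g}$. For $m=0$ the contribution is $H^{0}(\bar{\mathcal{F}}_{g},\Omega^{p}_{\Fg})$, which by the low-degree vanishing theorem for holomorphic forms on orthogonal modular varieties is $0$ for $0<p\le9$, equals $M_{\wedge^{p},p}(\Gg)$ for $10\le p\le18$ (subtop forms extending automatically across the toroidal boundary), and for $p=19=\dim\Fg$ is $H^{0}(\bar{\mathcal{F}}_{g},K)=S_{19}(\Gg,\det)$ by the standard boundary analysis of the canonical bundle, the cusp condition coming from the vanishing required for holomorphic extension. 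For the top base term with $m\ge1$ the same analysis gives $H^{0}(\bar{\mathcal{F}}_{g},\Omega^{19}_{\Fg}\otimes\mathcal{L}^{\otimes m})=S_{19+m}(\Gg,\det)$, so $k=19+2m$ yields $S_{19+m}(\Gg,\det)\otimes\C\mathcal{S}_{n,m}$. Assembling the cases: for $0<k\le9$ only the vanishing pure term is available, giving $0$; for $10\le k\le18$ only the pure term $M_{\wedge^{k},k}(\Gg)$ survives (a top term would require $p=19$, hence $k\ge19$); for odd $k=19+2m$ we obtain the cusp-form summand; and for even $k>19$ the pure term dies for dimension reasons ($\Omega^{k}_{\Fg}=0$) while every vertical term needs the odd degree $p=19$, incompatible with $k$ even---so the space is $0$. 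This reproduces \eqref{eqn: main}.
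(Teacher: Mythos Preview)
Your proposal is essentially the paper's proof: the holomorphic Leray filtration on $\Omega^{k}$, the K\"unneth computation of $\pi_{*}\Omega^{q}_{\pi_{n}}$, and the injectivity of the $d_{1}$ differentials via local Torelli so that only the edge $(p,m)=(k,0)$ or $p=19$ survives are exactly Proposition~2.2 and Lemma~2.3. The one substantive divergence is in the step you flag as the obstacle---passing to the projective model for $k=19+2m$. You propose to descend directly to $H^{0}(\bar{\mathcal{F}}_{g},K_{\bar{\mathcal{F}}_{g}}\otimes\mathcal{L}^{\otimes m})$ and read off the cusp condition from boundary behaviour of $\mathcal{L}$; the paper instead observes that over the open locus each surviving class is a sum of pullbacks of \emph{canonical} forms $\omega_{\sigma}$ from the intermediate fibre products $X_{\sigma}\simeq X_{m}$, and then proves (the Claim inside Proposition~2.5) that each $\omega_{\sigma}$ extends to $\bar{X}_{\sigma}$ by restricting $\omega$ to a multisection $Y\subset\bar{X}_{n}$ that kills all $\omega_{\sigma'}$ with $\sigma'\ne\sigma$. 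This reduces the cusp condition to the known identification $H^{0}(\bar{X}_{m},K_{\bar{X}_{m}})\simeq S_{19+m}(\Gg,\det)$ from \cite{Ma2}, and sidesteps any direct analysis of how $\mathcal{L}$ and the branch/boundary divisors interact on $\bar{\mathcal{F}}_{g}$---which is precisely what your route would still owe.
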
 

Here ${\Gg}$ is the modular group for $K3$ surfaces of genus $g$, 
which is defined as the kernel of ${\rm O}^{+}(L_{g})\to {\rm O}(L_{g}^{\vee}/L_{g})$ 
where $L_{g}=2U\oplus 2E_{8}\oplus \langle 2-2g \rangle$ is the period lattice of $K3$ surfaces of genus $g$. 
In the second case, $M_{\wedge^{k},k}({\Gg})$ stands for 
the space of vector-valued modular forms of weight $(\wedge^{k},k)$ for ${\Gg}$ (\cite{Ma3}). 
In the last case, $S\!_{19+m}({\Gg}, \det)$ stands for the space of scalar-valued cusp forms of weight $19+m$ and 
determinant character for ${\Gg}$, 
and $\mathcal{S}_{n,m}$ stands for the coset $\frak{S}_{n}/(\frak{S}_{m}\times \frak{S}_{n-m})$. 
Theorem \ref{thm: main} is actually formulated and proved in the generality of lattice-polarization (Theorem \ref{thm: main lattice-pol}). 

In the case of the top degree $k=19+2n$, namely for canonical forms, the isomorphism \eqref{eqn: main} is proved in \cite{Ma1}. 
Theorem \ref{thm: main} is the extension of this result to all degrees $k<19+2n$. 
The spaces in the right hand side of \eqref{eqn: main} can also be geometrically explained as follows. 
In the case $k\leq 18$, $M_{\wedge^{k},k}({\Gg})$ is identified with 
the space of holomorphic $k$-forms on a smooth projective model of ${\Fg}$, pulled back by ${\Fgn}\to {\Fg}$. 
In the case $k=19+2m$, $S\!_{19+m}({\Gg}, \det)$ is identified with the space of canonical forms on $\bar{\mathcal{F}}_{g,m}$, 
and the tensor product $S\!_{19+m}({\Gg}, \det)\otimes {\C}\mathcal{S}_{n,m}$ 
is the direct sum of pullback of such canonical forms by various projections ${\Fgn}\to \mathcal{F}_{g,m}$. 
Therefore Theorem \ref{thm: main} can be understood as a kind of classification result which says that 
except for canonical forms, there is essentially no new differential forms on the tower $({\Fgn})_{n}$ of moduli spaces. 
In fact, this is how the proof proceeds. 

The space $S\!_{l}({\Gg}, \det)$ is nonzero for every sufficiently large $l$, %say $l\geq 30$ if $g\gg 0$, 
so the space $H^{0}({\Fgncpt}, \Omega^{k})$ for odd $k\geq 19$ is typically nonzero (at least when $k$ is large). 
On the other hand, it is not clear at present whether $M_{\wedge^{k},k}({\Gg})\ne 0$ or not in the range $10\leq k \leq 18$. 
This is a subject of study in the theory of vector-valued orthogonal modular forms. 

The isomorphism \eqref{eqn: main} in the case $k=19+2m$ is an $\frak{S}_{n}$-equivariant isomorphism, 
where $\frak{S}_{n}$ acts on $H^{0}({\Fgncpt}, \Omega^{k})$ by its permutation action on ${\Fgn}$, 
while it acts on $S\!_{19+m}({\Gg}, \det)\otimes {\C}\mathcal{S}_{n,m}$ 
by its natural action on the coset $\mathcal{S}_{n,m}=\frak{S}_{n}/(\frak{S}_{m}\times \frak{S}_{n-m})$. 
Therefore, taking the $\frak{S}_{n}$-invariant part, we obtain the following simpler result 
for the unordered pointed moduli space ${\Fgn}/\frak{S}_{n}$, which is birationally a $K3^{[n]}$-fibration over ${\Fg}$. 

\begin{corollary}
Let $\overline{{\Fgn}/\frak{S}_{n}}$ be a smooth projective model of ${\Fgn}/\frak{S}_{n}$. 
Then we have a natural isomorphism 
\begin{equation*}
H^{0}(\overline{{\Fgn}/\frak{S}_{n}}, \Omega^{k}) \simeq 
\begin{cases}
0 & \: \: 0<k\leq 9 \\ 
M_{\wedge^{k},k}({\Gg}) & \: \:  10\leq k \leq 18 \\ 
0 & \: \: k>19, \: k\in 2{\Z} \\ 
S\!_{19+m}({\Gg}, \det) & \: \: k=19+2m, \: 0 \leq m \leq n 
\end{cases}
\end{equation*}
\end{corollary}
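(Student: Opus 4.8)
The plan is to deduce the corollary from Theorem \ref{thm: main} by passing to $\mathfrak{S}_{n}$-invariants. I would first arrange that the smooth projective model ${\Fgncpt}$ is itself $\mathfrak{S}_{n}$-equivariant, applying equivariant resolution of singularities to an $\mathfrak{S}_{n}$-equivariant compactification of ${\Fgn}$; since holomorphic forms are birational invariants this changes neither side of \eqref{eqn: main} nor the invariant subspace. Then $\overline{{\Fgn}/\mathfrak{S}_{n}}$ may be taken to be a resolution of the quotient ${\Fgncpt}/\mathfrak{S}_{n}$, and the key reduction is the identification
\[
H^{0}(\overline{{\Fgn}/\mathfrak{S}_{n}}, \Omega^{k}) \simeq H^{0}({\Fgncpt}, \Omega^{k})^{\mathfrak{S}_{n}}.
\]
This rests on the fact that an $\mathfrak{S}_{n}$-invariant holomorphic $k$-form on ${\Fgncpt}$ descends to the smooth locus of the quotient and extends across its (quotient, hence canonical) singularities to any resolution, while conversely a form on the resolution pulls back to an invariant form after passing to a common resolution.

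Granting this reduction, the corollary follows by computing the $\mathfrak{S}_{n}$-invariants of each term on the right-hand side of \eqref{eqn: main}. In the ranges $0<k\leq 9$ and $k>19$ even, the space $H^{0}({\Fgncpt}, \Omega^{k})$ already vanishes, so its invariant part vanishes. For $10\leq k \leq 18$, the isomorphism identifies $H^{0}({\Fgncpt}, \Omega^{k})$ with forms pulled back along the $\mathfrak{S}_{n}$-invariant morphism ${\Fgn}\to {\Fg}$; because $\mathfrak{S}_{n}$ acts trivially on such pullbacks, the invariant part is the whole space $M_{\wedge^{k},k}({\Gg})$.

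The only case carrying nontrivial representation theory is $k=19+2m$, where the isomorphism of Theorem \ref{thm: main} is $\mathfrak{S}_{n}$-equivariant, with $\mathfrak{S}_{n}$ acting through its permutation action on the coset $\mathcal{S}_{n,m}=\mathfrak{S}_{n}/(\mathfrak{S}_{m}\times \mathfrak{S}_{n-m})$ and trivially on $S\!_{19+m}({\Gg},\det)$. Hence
\[
\bigl(S\!_{19+m}({\Gg},\det)\otimes {\C}\mathcal{S}_{n,m}\bigr)^{\mathfrak{S}_{n}} \simeq S\!_{19+m}({\Gg},\det)\otimes ({\C}\mathcal{S}_{n,m})^{\mathfrak{S}_{n}},
\]
and since $\mathcal{S}_{n,m}$ is a transitive $\mathfrak{S}_{n}$-set, the invariants of its permutation representation are one-dimensional, spanned by the sum of all cosets. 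This yields $S\!_{19+m}({\Gg},\det)$, matching the stated formula.

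The genuinely delicate point is the reduction to invariants, namely the extension of invariant forms across the singularities of ${\Fgncpt}/\mathfrak{S}_{n}$; everything else is formal. I expect this to be settled by the extension property for holomorphic forms on quotient singularities, which applies because ${\Fgn}$ and its equivariant compactification carry at worst finite quotient singularities, and these are preserved under the further finite quotient by $\mathfrak{S}_{n}$. Once this is in place, the equivariance already recorded for the $k=19+2m$ case of Theorem \ref{thm: main} makes the remaining invariant computations immediate.
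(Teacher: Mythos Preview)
Your proposal is correct and follows the same approach as the paper, which deduces the corollary in one line by ``taking the $\frak{S}_{n}$-invariant part'' of Theorem~\ref{thm: main}; you have simply spelled out the details of that passage to invariants (equivariant model, descent to the quotient, and the computation $({\C}\mathcal{S}_{n,m})^{\frak{S}_{n}}\simeq{\C}$). One small terminological slip: quotient singularities are klt rather than canonical in general, but the extension property for holomorphic differential forms that you invoke holds for quotient (indeed klt) singularities, so the argument is unaffected.
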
 

The universal $K3$ surface $\mathcal{F}_{g,1}$ is an analogue of elliptic modular surfaces (\cite{Shi}), 
and the moduli spaces ${\Fgn}$ for general $n$ are analogues of the so-called Kuga varieties over modular curves (\cite{Sho}). 
Starting with the case of elliptic modular surfaces \cite{Shi}, 
holomorphic differential forms on the Kuga varieties have been described in terms of elliptic modular forms: 
\cite{Sho} for canonical forms, and \cite{Gor} for the case of lower degrees (somewhat implicitly). 
Theorem \ref{thm: main} can be regarded as a $K3$ version of these results. 

As a final remark, 
in view of the analogy between universal $K3$ surfaces and elliptic modular surfaces, 
invoking the classical fact that elliptic modular surfaces have maximal Picard number (\cite{Shi}) now raises the question 
if $H^{k,0}({\Fgncpt})\oplus H^{0,k}({\Fgncpt})$ is a sub ${\Q}$-Hodge structure of $H^{k}({\Fgncpt}, {\C})$. 
This is independent of the choice of a smooth projective model ${\Fgncpt}$. 

The rest of this paper is devoted to the proof of Theorem \ref{thm: main}. 
In \S \ref{ssec: hol Leray} we compute a part of the holomorphic Leray spectral sequence 
associated to a certain type of $K3^{n}$-fibration. 
This is the main step of the proof. 
In \S \ref{ssec: extension} we study differential forms on a compactification of such a fibration. 
In \S \ref{ssec: univ K3} we deduce (a generalized version of) Theorem \ref{thm: main} 
by combining the result of \S \ref{ssec: extension} 
with some results from \cite{Pom}, \cite{Ma1}, \cite{Ma2}, \cite{Ma3}. 
Sometimes we drop the subscript $X$ from the notation $\Omega_{X}^{k}$ when the variety $X$ is clear from the context.

\section{Proof}

\subsection{Holomorphic Leray spectral sequence}\label{ssec: hol Leray}

Let $\pi\colon X\to B$ be a smooth family of $K3$ surfaces over a smooth connected base $B$. 
In this subsection $X$ and $B$ may be analytic. 
We put the following assumption: 

\begin{condition}\label{condition}
In a neighborhood of every point of $B$, the period map is an embedding. 
\end{condition}

\noindent
This is equivalent to the condition that 
the differential of the period map 
\begin{equation*}
T_{b}B \to {\rm Hom}(H^{2,0}(X_{b}), H^{1,1}(X_{b})) 
\end{equation*}
is injective for every $b\in B$, where $X_{b}$ is the fiber of $\pi$ over $b$. 

For a natural number $n>0$ we denote by $X_{n}=X\times_{B}\cdots \times_{B}X$ 
the $n$-fold fiber product of $X$ over $B$, 
and let $\pi_{n}\colon X_{n}\to B$ be the projection. 
We denote by $\Omega_{\pi_{n}}$ the relative cotangent bundle of $\pi_{n}$, and 
$\Omega_{\pi_{n}}^{p}=\wedge^{p}\Omega_{\pi_{n}}$ for $p\geq 0$ as usual. 

\begin{proposition}\label{prop: hol Leray}
Let $\pi\colon X\to B$ be a $K3$ fibration satisfying Condition \ref{condition}. 
Then we have a natural isomorphism 
\begin{equation*}
(\pi_{n})_{\ast}\Omega_{X_{n}}^{k} \simeq 
\begin{cases}
\Omega_{B}^{k} & \; \; k\leq \dim B \\ 
0 & \; \; k>\dim B, \: k\not \equiv \dim B \: (2) \\ 
K_{B}\otimes (\pi_{n})_{\ast}\Omega_{\pi_{n}}^{2m} & \; \; k=\dim B+2m, \: 0\leq m \leq n 
\end{cases}
\end{equation*}
\end{proposition}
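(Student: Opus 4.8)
The plan is to analyze the pushforward through the relative cotangent sequence $0\to \pi_{n}^{\ast}\Omega_{B}\to \Omega_{X_{n}}\to \Omega_{\pi_{n}}\to 0$ and the decreasing filtration $F^{\bullet}$ it induces on $\Omega_{X_{n}}^{k}$, whose graded pieces are $\mathrm{gr}^{p}=\pi_{n}^{\ast}\Omega_{B}^{p}\otimes\Omega_{\pi_{n}}^{k-p}$ for $0\leq p\leq k$. I would run the associated (holomorphic Leray) spectral sequence for $(\pi_{n})_{\ast}$, with $E_{1}^{p,q}=R^{p+q}(\pi_{n})_{\ast}\mathrm{gr}^{p}$ and, by the projection formula, $E_{1}^{p,-p}=\Omega_{B}^{p}\otimes(\pi_{n})_{\ast}\Omega_{\pi_{n}}^{k-p}$ on the antidiagonal of total degree $0$ that computes $(\pi_{n})_{\ast}\Omega_{X_{n}}^{k}$. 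The whole statement then becomes the assertion that, on this antidiagonal, exactly one term survives to $E_{\infty}$.

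First I would compute the relative Hodge sheaves fiberwise. By Künneth applied to the $K3$ Hodge diamond, $H^{0}(X_{n,b},\Omega^{j})$ is nonzero precisely when $j=2m$ is even, with basis the products $\omega_{S}=\bigwedge_{i\in S}\omega_{i}$ of the holomorphic two-forms from $m=|S|$ distinct factors, so that $(\pi_{n})_{\ast}\Omega_{\pi_{n}}^{2m}$ is locally free of rank $\binom{n}{m}$ (and vanishes in odd degree); similarly $R^{1}(\pi_{n})_{\ast}\Omega_{\pi_{n}}^{j}$ is supported in odd $j$. Since the fiberwise dimensions are constant, Grauert's theorem gives local freeness and base change. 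Consequently the only nonzero antidiagonal terms are those with $k-p$ even and $0\leq p\leq\dim B$, and the term of largest $p$ is $p=k$ when $k\leq\dim B$ (giving $\Omega_{B}^{k}$, as $(\pi_{n})_{\ast}\mathcal{O}_{X_{n}}=\mathcal{O}_{B}$), and $p=\dim B$ when $k-\dim B=2m\geq 0$ (giving $K_{B}\otimes(\pi_{n})_{\ast}\Omega_{\pi_{n}}^{2m}$).

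The crux is to identify and control $d_{1}\colon E_{1}^{p,-p}\to E_{1}^{p+1,-p}$, which I expect to be the cup product against the Kodaira--Spencer class. Because $X_{n}$ is a fiber product, $T_{\pi_{n}}=\bigoplus_{i}\mathrm{pr}_{i}^{\ast}T_{\pi}$ and the Kodaira--Spencer class is $\kappa_{n}=\sum_{i}\mathrm{pr}_{i}^{\ast}\kappa$; contracting $\omega_{S}$ against $\kappa_{n}$ reproduces, in each factor, the differential of the period map, which by Condition \ref{condition} is an injection $\rho\colon T_{B}\hookrightarrow\mathcal{H}\!om(\lambda,\mathcal{V})$ onto a subbundle (here $\lambda=(\pi)_{\ast}\Omega_{\pi}^{2}$ and $\mathcal{V}=R^{1}\pi_{\ast}\Omega_{\pi}^{1}$). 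In the fiberwise bases this turns $d_{1}$ into the Koszul-type map $\xi\otimes\omega_{S}\mapsto\sum_{l\in S}\pm(\xi\wedge\rho^{(l)})\otimes\omega_{S\setminus l}$, where $\rho^{(l)}$ denotes $\rho$ acting in the $l$-th factor.

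The statement then follows from the elementary observation that, for injective $\rho$, wedging $\wedge^{p}T_{B}^{\vee}\to\wedge^{p+1}T_{B}^{\vee}\otimes\mathcal{V}$, $\xi\mapsto\xi\wedge\rho$, is injective for $p<\dim B$ and vanishes for $p=\dim B$. Hence $d_{1}$ is injective on every antidiagonal term of relative degree $\geq 2$ with $p<\dim B$, killing it already at $E_{2}$ (there is no incoming differential, since those would originate in total degree $-1$), while it vanishes on the top surviving term, whose outgoing differentials land in zero, either because $\Omega_{B}^{p+1}=0$ or because the relative degree would become negative. No higher differential can touch the survivor, and as it is the unique surviving graded piece there is no extension problem; this yields $\Omega_{B}^{k}$, $0$, or $K_{B}\otimes(\pi_{n})_{\ast}\Omega_{\pi_{n}}^{2m}$ in the three respective ranges. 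I expect the main obstacle to be the clean identification of $d_{1}$ with the Kodaira--Spencer cup product, i.e. matching the connecting map of the filtration with $\kappa_{n}$, together with the Künneth bookkeeping of the multi-factor contraction; once these are in place, the Koszul injectivity is pure linear algebra.
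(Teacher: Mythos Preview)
Your proposal is correct and follows essentially the same route as the paper: the paper too runs the holomorphic Leray spectral sequence for $\pi_{n}$, identifies the $E_{1}$ page with the Koszul complex $\mathcal{H}^{k-l,l+q}\otimes\Omega_{B}^{l}$ via Voisin's book, computes the antidiagonal $E_{1}^{l,-l}$ by K\"unneth on the $K3^{n}$ Hodge numbers, and shows $d_{1}$ is injective for $l<\min(\dim B,k)$ by factoring through the period-map adjunction (Condition \ref{condition}) and the nondegeneracy of the wedge product $\wedge^{l}W\times W\to\wedge^{l+1}W$. The one point the paper makes slightly more explicit than you do is that the K\"unneth decomposition of the target $\mathcal{H}^{2m-1,1}$ separates the summands indexed by $(\sigma\setminus\{i\},i)$, so the injectivity of the full $d_{1}$ reduces cleanly to the single-factor linear-algebra fact you state.
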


This assertion amounts to a partial degeneration of the holomorphic Leray spectral sequence. 
Recall (\cite{Voi} \S 5.2) that $\Omega_{X_{n}}^k$ has the holomorphic Leray filtration $L^{\bullet}\Omega_{X_{n}}^k$ defined by 
\begin{equation*}
L^{l}\Omega_{X_{n}}^k= \pi_{n}^{\ast}\Omega_{B}^{l}\wedge \Omega_{X_{n}}^{k-l}, 
\end{equation*}
whose graded quotients are naturally isomorphic to 
\begin{equation*}
{\rm Gr}_{L}^{l}\Omega_{X_{n}}^{k} = \pi_{n}^{\ast}\Omega_{B}^{l} \otimes \Omega_{\pi_{n}}^{k-l}. 
\end{equation*}
This filtration induces the holomorphic Leray spectral sequence 
\begin{equation*}
(E_{r}^{l,q}, d_{r}) \: \: \:  \Rightarrow \: \: \:  E_{\infty}^{l+q} = R^{l+q}(\pi_{n})_{\ast}\Omega_{X_{n}}^{k} 
\end{equation*}
which converges to the filtration 
\begin{equation*}
L^{l}R^{l+q}(\pi_{n})_{\ast}\Omega_{X_{n}}^{k} \: = \: 
{\rm Im}(R^{l+q}(\pi_{n})_{\ast}L^{l}\Omega_{X_{n}}^{k} \to R^{l+q}(\pi_{n})_{\ast}\Omega_{X_{n}}^{k}). 
\end{equation*}
By \cite{Voi} Proposition 5.9,  the $E_{1}$ page coincides with 
the collection of the Koszul complexes associated to the variation of Hodge structures for $\pi_{n}$: 
\begin{equation}\label{eqn: E1}
(E_{1}^{l,q}, d_{1}) = (\mathcal{H}^{k-l,l+q}\otimes \Omega_{B}^{l}, \bar{\nabla}). 
\end{equation}
Here $\mathcal{H}^{\ast, \ast}$ are the Hodge bundles associated to the fibration $\pi_{n}\colon X_{n}\to B$, and 
\begin{equation*}
\bar{\nabla} : \mathcal{H}^{\ast, \ast}\otimes \Omega_{B}^{\ast} \to \mathcal{H}^{\ast-1, \ast+1}\otimes \Omega_{B}^{\ast+1} 
\end{equation*}
are the differentials in the Koszul complexes (see \cite{Voi} \S 5.1.3). 
For degree reasons, the range of $(l, q)$ in the $E_{1}$ page satisfies the inequalities 
\begin{equation*}
0\leq l \leq \dim B, \quad 0\leq k-l \leq 2n, \quad 0\leq l+q \leq 2n. 
\end{equation*}
The first two can be unified: 
\begin{equation}\label{eqn: (l,q)}
\max(0, k-2n) \leq l \leq \min(\dim B, k), \quad 0\leq l+q \leq 2n. 
\end{equation}
We calculate the $E_{1}$ to $E_{2}$ pages on the edge line $l+q=0$. 

\begin{lemma}\label{lem: spectral sequence}
The following holds. 

(1) $E_{1}^{l,-l}=0$ when $l\leq \min(\dim B, k)$ with $l\not\equiv k$ mod $2$. 

(2) $E_{2}^{l,-l}=0$ when $l< \min(\dim B, k)$. 

(3) For $l_{0} = \min(\dim B, k)$ we have 
$E_{1}^{l_{0},-l_{0}}=E_{2}^{l_{0},-l_{0}}= \cdots = E_{\infty}^{l_{0},-l_{0}}$. 
\end{lemma}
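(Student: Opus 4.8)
The plan is to exploit the two structural features of the $E_{1}$ page along the edge line $l+q=0$: first, that $E_{1}^{l,-l}=\mathcal{H}^{k-l,0}\otimes\Omega_{B}^{l}$ involves only the \emph{top} Hodge bundle $\mathcal{H}^{k-l,0}$ of the product fibration $\pi_{n}$; second, that the only differential meeting this line is $d_{1}=\bar{\nabla}$ itself. For part (1) I would compute $\mathcal{H}^{p,0}$ through the K\"unneth decomposition $H^{p,0}(X_{b}^{n})=\bigoplus_{p_{1}+\cdots+p_{n}=p}\bigotimes_{i}H^{p_{i},0}(X_{b})$. Since $X_{b}$ is a $K3$ surface, $H^{1,0}(X_{b})=H^{3,0}(X_{b})=0$, so $H^{p_{i},0}(X_{b})\ne 0$ only for $p_{i}\in\{0,2\}$; hence $\mathcal{H}^{p,0}=0$ for every odd $p$, and in particular $E_{1}^{l,-l}=\mathcal{H}^{k-l,0}\otimes\Omega_{B}^{l}=0$ whenever $k-l$ is odd, i.e. $l\not\equiv k\pmod{2}$. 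This gives (1).

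For part (2), the incoming differential $d_{1}\colon E_{1}^{l-1,-l}\to E_{1}^{l,-l}$ has source $\mathcal{H}^{k-l+1,-1}\otimes\Omega_{B}^{l-1}=0$, so $E_{2}^{l,-l}=\ker\bigl(\bar{\nabla}\colon\mathcal{H}^{k-l,0}\otimes\Omega_{B}^{l}\to\mathcal{H}^{k-l-1,1}\otimes\Omega_{B}^{l+1}\bigr)$. By (1) we may assume $k-l=2m$ is even, and since $l<k$ we have $m\geq 1$. The key step is to make the K\"unneth structure of $\bar{\nabla}$ explicit. Writing $\mathcal{L}=\mathcal{H}^{2,0}$ and $\mathcal{V}=\mathcal{H}^{1,1}$ for the Hodge bundles of the single fibration $\pi\colon X\to B$, the bundle $\mathcal{H}^{2m,0}$ decomposes as $\bigoplus_{|S|=m}\bigotimes_{i\in S}\mathcal{L}$, indexed by the $m$-element subsets $S\subseteq\{1,\dots,n\}$ recording which factors contribute $H^{2,0}(X_{b})$ (the others contributing $H^{0,0}=\C$). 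Because the Gauss--Manin connection of the product acts by the Leibniz rule and is trivial on the $H^{0,0}$-factors, $\bar{\nabla}$ carries the $S$-component only to those components of $\mathcal{H}^{2m-1,1}$ obtained by replacing one factor $i\in S$ via the single-surface period differential $\theta\colon\mathcal{L}\to\mathcal{V}\otimes\Omega_{B}^{1}$. A direct check shows the target component indexed by $(S\setminus\{i\},i)$ is reached from the unique source $S=(S\setminus\{i\})\cup\{i\}$, and on each source the induced map is the wedge map $\theta\wedge\colon\mathcal{L}^{\otimes m}\otimes\Omega_{B}^{l}\to\mathcal{L}^{\otimes(m-1)}\otimes\mathcal{V}\otimes\Omega_{B}^{l+1}$; consequently $\ker\bar{\nabla}=\bigoplus_{|S|=m}\ker(\theta\wedge)$, and $E_{2}^{l,-l}=0$ iff $\theta\wedge$ is injective. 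Condition \ref{condition} says precisely that $\theta$ is fibrewise injective: choosing at $b$ a coframe $dz_{1},\dots,dz_{d}$ with $d=\dim B$ and writing $\theta=\sum_{j}\kappa_{j}\otimes dz_{j}$ with $\kappa_{1},\dots,\kappa_{d}\in\mathcal{V}_{b}$ linearly independent, the kernel of $\theta\wedge$ on $\Omega_{B}^{l}$ equals $\bigcap_{j}\ker(dz_{j}\wedge\cdot)$, which vanishes for $l<d$ by exactness of the Koszul complex. This yields (2).

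For part (3), set $l_{0}=\min(\dim B,k)$ and examine every differential through $(l_{0},-l_{0})$. Any $d_{r}$ ($r\geq 1$) entering $(l_{0},-l_{0})$ originates at $(l_{0}-r,-l_{0}+r-1)$, which lies in total degree $l+q=-1$ and hence vanishes. Any $d_{r}$ leaving $(l_{0},-l_{0})$ lands at $(l_{0}+r,-l_{0}-r+1)$, whose $E_{1}$-term $\mathcal{H}^{k-l_{0}-r,\ast}\otimes\Omega_{B}^{l_{0}+r}$ is zero because either $l_{0}=\dim B$, forcing $\Omega_{B}^{l_{0}+r}=0$, or $l_{0}=k$, forcing $\mathcal{H}^{-r,\ast}=0$. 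Since every incoming and outgoing differential vanishes at each page, the term stabilises, giving $E_{1}^{l_{0},-l_{0}}=E_{2}^{l_{0},-l_{0}}=\cdots=E_{\infty}^{l_{0},-l_{0}}$.

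The main obstacle is the second paragraph: correctly reading off the K\"unneth/Leibniz structure of $\bar{\nabla}$ to establish that its kernel splits over the subsets $S$ into copies of the single wedge map $\theta\wedge$, thereby reducing the vanishing to Condition \ref{condition}. The ingredients in (1) and (3)---Hodge-number vanishing and support/degree vanishing---are then routine.
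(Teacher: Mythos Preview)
Your proof is correct and follows essentially the same approach as the paper: K\"unneth for (1), range/degree vanishing for (3), and for (2) the K\"unneth block decomposition of $\bar{\nabla}$ reducing injectivity to the single-factor period differential combined with nondegeneracy of the wedge product for $l<\dim B$. The only cosmetic difference is that in (2) the paper phrases the last step coordinate-freely via the adjunctions $\wedge^{l}W\hookrightarrow W^{\vee}\otimes\wedge^{l+1}W$ and $V\otimes W^{\vee}\hookrightarrow H^{1,1}(X_{b})$, whereas you choose a local coframe and invoke Koszul exactness---these are the same linear-algebra fact.
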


\begin{proof}
By \eqref{eqn: E1}, we have $E_{1}^{l,-l}=\mathcal{H}^{k-l,0}\otimes \Omega_{B}^{l}$. 
By the K\"unneth formula, the fiber of $\mathcal{H}^{k-l,0}$ over a point $b\in B$ is identified with 
\begin{equation}\label{eqn: Hk-l,0}
H^{k-l,0}(X_{b}^{n}) = \bigoplus_{(p_{1}, \cdots, p_{n})} H^{p_{1},0}(X_{b}) \otimes \cdots \otimes H^{p_{n},0}(X_{b}), 
\end{equation} 
where $(p_{1}, \cdots, p_{n})$ ranges over all indices with 
$\sum_{i}p_{i}=k-l$ and $0\leq p_{i} \leq 2$. 

(1) When $k-l$ is odd, every index $(p_{1}, \cdots, p_{n})$ in \eqref{eqn: Hk-l,0} 
must contain a component $p_{i}=1$. 
Since $H^{1,0}(X_b)=0$, we see that $H^{k-l,0}(X_{b}^{n})=0$. 
Therefore $\mathcal{H}^{k-l,0}=0$ when $k-l$ is odd. 

(3) Let $l_{0} = \min(\dim B, k)$. 
By the range \eqref{eqn: (l,q)} of $(l, q)$, 
we see that for every $r\geq 1$ 
the source of $d_{r}$ that hits to $E_{r}^{l_{0}, -l_{0}}$ is zero, 
and the target of $d_{r}$ that starts from $E_{r}^{l_{0}, -l_{0}}$ is also zero. 
This proves our assertion. 

(2) Let $l < \min(\dim B, k)$. 
In view of (1), we may assume that $l=k-2m$ for some $m>0$. 
By \eqref{eqn: (l,q)}, the source of $d_{1}$ that hits to $E_{1}^{l,-l}$ is zero. 
We shall show that 
$d_{1}\colon E_{1}^{l,-l}\to E_{1}^{l+1,-l}$ 
is injective. 
By \eqref{eqn: E1}, this morphism is identified with 
\begin{equation}\label{eqn: d1 Koszul}
\bar{\nabla} : \mathcal{H}^{2m,0}\otimes \Omega_{B}^{l} \to  \mathcal{H}^{2m-1,1}\otimes \Omega_{B}^{l+1}. 
\end{equation}
By the K\"unneth formula as in \eqref{eqn: Hk-l,0}, 
the fibers of the Hodge bundles $\mathcal{H}^{2m,0}$, $\mathcal{H}^{2m-1,1}$ over $b\in B$ 
are respectively identified with 
\begin{equation}\label{eqn: H2m,0}
H^{2m,0}(X_{b}^{n}) = \bigoplus_{|\sigma|=m} H^{2,0}(X_{b})^{\otimes \sigma}, 
\end{equation}
\begin{eqnarray}\label{eqn: H2m-1,1}
H^{2m-1,1}(X_{b}^{n}) 
& = & \bigoplus_{|\sigma'|=m-1} \bigoplus_{i\not\in \sigma'} H^{2,0}(X_{b})^{\otimes \sigma'}\otimes H^{1,1}(X_{b}) \\ 
& = & \bigoplus_{|\sigma|=m} \bigoplus_{i\in \sigma} H^{2,0}(X_{b})^{\otimes \sigma-\{ i \}}\otimes H^{1,1}(X_{b}). \nonumber 
\end{eqnarray}
In \eqref{eqn: H2m,0}, $\sigma$ ranges over 
all subsets of $\{ 1, \cdots, n\}$ consisting of $m$ elements, 
and $H^{2,0}(X_{b})^{\otimes \sigma}$ stands for 
the tensor product of $H^{2,0}(X_{b})$ for the $j$-th factors $X_{b}$ of $X_{b}^{n}$ over all $j\in \sigma$.  
The notations $\sigma', \sigma$ in \eqref{eqn: H2m-1,1} are similar, 
and $H^{1,1}(X_{b})$ in \eqref{eqn: H2m-1,1} is the $H^{1,1}$ of the $i$-th factor $X_{b}$ of $X_{b}^{n}$. 

Let us write $V=H^{2,0}(X_{b})$ and $W=(T_{b}B)^{\vee}$ for simplicity. 
The homomorphism \eqref{eqn: d1 Koszul} over $b\in B$ is written as 
\begin{equation}\label{eqn: d1 Hodge}
\bigoplus_{|\sigma|=m} \left( V^{\otimes \sigma}\otimes \wedge^{l}W \to 
\bigoplus_{i\in \sigma} V^{\otimes \sigma - \{ i\}}\otimes H^{1,1}(X_{b})\otimes \wedge^{l+1}W \right). 
\end{equation}
By \cite{Voi} Lemma 5.8, the $(\sigma, i)$-component 
\begin{equation}\label{eqn: (sigma, i)}
V^{\otimes \sigma}\otimes \wedge^{l}W \to 
V^{\otimes \sigma - \{ i\}}\otimes H^{1,1}(X_{b})\otimes \wedge^{l+1}W 
\end{equation}
factorizes as 
\begin{eqnarray*}
V^{\otimes \sigma}\otimes \wedge^{l}W & \to & 
V^{\otimes \sigma - \{ i\}}\otimes H^{1,1}(X_{b}) \otimes W \otimes \wedge^{l}W \\ 
& \to & V^{\otimes \sigma - \{ i\}}\otimes H^{1,1}(X_{b})\otimes \wedge^{l+1}W, 
\end{eqnarray*} 
where the first map is induced by the adjunction 
$V\to H^{1,1}(X_b)\otimes W$ of the differential of the period map for the $i$-th factor $X_{b}$, 
and the second map is induced by the wedge product 
$W \otimes \wedge^{l}W \to \wedge^{l+1}W$. 
By linear algebra, this composition can also be decomposed as  
\begin{eqnarray}\label{eqn: decompose}
V^{\otimes \sigma}\otimes \wedge^{l}W & \to & 
V^{\otimes \sigma - \{ i\}}\otimes V \otimes W^{\vee} \otimes \wedge^{l+1}W \\ 
& \to & V^{\otimes \sigma - \{ i\}}\otimes H^{1,1}(X_{b})\otimes \wedge^{l+1}W, \nonumber 
\end{eqnarray} 
where the first map is induced by the adjunction $\wedge^{l}W \to W^{\vee} \otimes \wedge^{l+1}W$ 
of the wedge product, 
and the second map is induced by the adjunction $V\otimes W^{\vee}\to H^{1,1}(X_{b})$ of the differential of the period map. 
By our initial assumption \ref{condition}, the second map of \eqref{eqn: decompose} is injective. 
Moreover, since $l+1\leq \dim W$ by our assumption, 
the wedge product $\wedge^{l}W\times W \to \wedge^{l+1}W$ is nondegenerate, 
so its adjunction $\wedge^{l}W \to W^{\vee} \otimes \wedge^{l+1}W$ is injective. 
Thus the first map of \eqref{eqn: decompose} is also injective. 
It follows that \eqref{eqn: (sigma, i)} is injective. 
Since the map \eqref{eqn: d1 Hodge} is the direct sum of its $(\sigma, i)$-components, it is injective. 
This finishes the proof of Lemma \ref{lem: spectral sequence}. 
\end{proof}

We can now complete the proof of Proposition \ref{prop: hol Leray}. 

\begin{proof}[(Proof of Proposition \ref{prop: hol Leray})]
By Lemma \ref{lem: spectral sequence} (2), 
we have $E_{\infty}^{l,-l}=0$ when $l<l_{0}=\min(\dim B, k)$. 
Together with Lemma \ref{lem: spectral sequence} (3), we obtain  
\begin{equation*}
(\pi_{n})_{\ast}\Omega_{X_{n}}^{k} = E_{\infty}^{0} = E_{\infty}^{l_{0}, -l_{0}} = E_{1}^{l_{0}, -l_{0}}. 
\end{equation*}
When $k\leq \dim B$, we have $l_{0}=k$, and 
$E_{1}^{l_{0}, -l_{0}}=\Omega_{B}^{k}$ by \eqref{eqn: E1}. 
When $k> \dim B$, we have $l_{0}= \dim B$, and 
$E_{1}^{l_{0}, -l_{0}}=\mathcal{H}^{k-\dim B, 0}\otimes K_{B}$ by \eqref{eqn: E1}. 
When $k-\dim B$ is odd, this vanishes by Lemma \ref{lem: spectral sequence} (1). 
\end{proof}

In the case $k=\dim B + 2m$, the vector bundle 
$\mathcal{H}^{2m,0}\otimes K_{B}=(\pi_{n})_{\ast}\Omega_{\pi_{n}}^{2m}\otimes K_{B}$ 
can be written more specifically as follows. 
For a subset $\sigma$ of $\{ 1, \cdots, n \}$ with cardinality $| \sigma |=m$, 
we denote by $X_{\sigma}\simeq X_{m}$ the fiber product of 
the $i$-th factors $X\to B$ of $X_{n}\to B$ over all $i\in \sigma$. 
We denote by 
\begin{equation*}
X_{n} \stackrel{\pi_{\sigma}}{\to} X_{\sigma} \stackrel{\pi^{\sigma}}{\to} B 
\end{equation*}
the natural projections. 
The K\"unneth formula \eqref{eqn: H2m,0} says that 
\begin{equation*}
(\pi_{n})_{\ast}\Omega_{\pi_{n}}^{2m} \simeq 
\bigoplus_{|\sigma|=m} \pi^{\sigma}_{\ast}K_{\pi^{\sigma}}. 
\end{equation*}
Combining this with the isomorphism 
\begin{equation}\label{eqn: KXsigma}
\pi^{\sigma}_{\ast}K_{X_{\sigma}}\simeq K_{B}\otimes \pi^{\sigma}_{\ast}K_{\pi^{\sigma}} 
\end{equation}
for each $X_{\sigma}$, we can rewrite the isomorphism in the last case of Proposition \ref{prop: hol Leray} as  
\begin{equation}\label{eqn: push OmegaX k>dimB}
(\pi_{n})_{\ast}\Omega_{X_{n}}^{\dim B+2m} \simeq 
\bigoplus_{|\sigma|=m} \pi^{\sigma}_{\ast}K_{X_{\sigma}}. 
\end{equation}

\subsection{Extension over compactification}\label{ssec: extension}

Let $\pi\colon X\to B$ be a $K3$ fibration as in \S \ref{ssec: hol Leray}. 
We now assume that $X, B$ are quasi-projective and $\pi$ is a morphism of algebraic varieties. 
We take smooth projective compactifications of $X_{n}, X_{\sigma}, B$ and denote them by 
$\bar{X}_{n}, \bar{X}_{\sigma}, \bar{B}$ respectively. 

\begin{proposition}\label{prop: extension}
We have 
\begin{equation*}
H^{0}(\bar{X}_{n}, \Omega^{k}) \simeq 
\begin{cases}
H^{0}(\bar{B}, \Omega^{k}) & \: \: k\leq \dim B \\ 
0 & \: \: k>\dim B, \: k\not\equiv \dim B \: (2) \\ 
\oplus_{\sigma}H^{0}(\bar{X}_{\sigma}, K_{\bar{X}_{\sigma}}) & \: \: k=\dim B+2m, \: 0\leq m \leq n  
\end{cases}
\end{equation*}
In the last case, $\sigma$ ranges over all subsets of $\{ 1, \cdots, n \}$ with $|\sigma|=m$. 
The isomorphism in the first case is given by the pullback by $\pi_{n}\colon X_{n}\to B$, 
and the isomorphism in the last case is given by the direct sum of the pullbacks by 
$\pi_{\sigma}\colon X_{n}\to X_{\sigma}$ for all $\sigma$. 
\end{proposition}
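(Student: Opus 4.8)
The plan is to upgrade the sheaf-level computation of Proposition \ref{prop: hol Leray} into a statement about global sections over the smooth projective compactifications. The main tool is the observation that a holomorphic form defined on a smooth quasi-projective variety extends holomorphically (with no poles) across any smooth projective compactification; this is the standard extension result for holomorphic forms on varieties of log-canonical type, and it means that $H^{0}(\bar{X}_{n}, \Omega^{k})$, $H^{0}(\bar{B}, \Omega^{k})$ and $H^{0}(\bar{X}_{\sigma}, K_{\bar{X}_{\sigma}})$ can be computed on the open parts $X_{n}, B, X_{\sigma}$ and are independent of the chosen compactification. First I would record this reduction so that every cohomology group in the statement is identified with holomorphic forms on the (non-compact) smooth algebraic varieties themselves.

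Next I would pass from the fiberwise pushforward to global sections. By the projection-type identities, a global holomorphic $k$-form on $X_{n}$ is the same as a global section of $(\pi_{n})_{\ast}\Omega^{k}_{X_{n}}$ over $B$: concretely,
\begin{equation*}
H^{0}(X_{n}, \Omega^{k}_{X_{n}}) = H^{0}(B, (\pi_{n})_{\ast}\Omega^{k}_{X_{n}}).
\end{equation*}
Now I would substitute the three cases of Proposition \ref{prop: hol Leray}. In the range $k\leq \dim B$ the right side becomes $H^{0}(B, \Omega^{k}_{B})$, which by the extension result equals $H^{0}(\bar{B}, \Omega^{k})$; the isomorphism is visibly $\pi_{n}^{\ast}$, matching the claim. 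In the odd case $k>\dim B$, $k\not\equiv \dim B$ mod $2$, the sheaf itself vanishes, so the group is zero. In the remaining case $k=\dim B+2m$ I would feed in the refined formula \eqref{eqn: push OmegaX k>dimB}, giving
\begin{equation*}
H^{0}(X_{n}, \Omega^{k}_{X_{n}}) \simeq \bigoplus_{|\sigma|=m} H^{0}(B, \pi^{\sigma}_{\ast}K_{X_{\sigma}}) = \bigoplus_{|\sigma|=m} H^{0}(X_{\sigma}, K_{X_{\sigma}}),
\end{equation*}
and again invoke extension to rewrite each summand as $H^{0}(\bar{X}_{\sigma}, K_{\bar{X}_{\sigma}})$, with the identification realized by the pullbacks $\pi_{\sigma}^{\ast}$.

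The step I expect to be the genuine obstacle is justifying the holomorphic extension of forms across the boundary in each of the three cases. One must check that the varieties $X_{n}, X_{\sigma}, B$ and the chosen projective models satisfy the hypotheses under which pluricanonical and subcanonical holomorphic forms extend without poles — in practice this follows from Pomerleano's or related extension theorems (\cite{Pom}) once one knows the boundary divisors are controlled, but the argument must be applied consistently so that the three extension statements are compatible with the pullback maps $\pi_{n}^{\ast}$ and $\pi_{\sigma}^{\ast}$. A secondary point requiring care is verifying that these pullback maps are not merely injective but surjective onto the indicated spaces; injectivity is clear since $\pi_{n}$ and $\pi_{\sigma}$ are dominant, while surjectivity is exactly what the pushforward computation of Proposition \ref{prop: hol Leray} supplies, provided the global-sections-versus-extension bookkeeping is done correctly. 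Everything else is a formal assembling of the sheaf-level result with the birational invariance of holomorphic forms.
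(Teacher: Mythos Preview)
The central claim of your proposal---that holomorphic forms on $X_{n}$, $B$, $X_{\sigma}$ automatically extend to their smooth projective compactifications---is false in general, and this is where your argument breaks down. A holomorphic form on a quasi-projective variety need not extend: already $dz$ on $\mathbb{A}^{1}$ does not extend to ${\mathbb P}^{1}$. Birational invariance of $H^{0}(\bar{Y},\Omega^{k})$ means independence of the choice of smooth projective model $\bar{Y}$, not equality with $H^{0}(Y,\Omega^{k})$. Pommerening's theorem \cite{Pom} is specific to arithmetic quotients of Hermitian symmetric domains and is invoked in the paper only later, for $\bar{B}$ in the modular-variety setting of Theorem \ref{thm: main lattice-pol}; it says nothing about $\bar{X}_{n}$ or $\bar{X}_{\sigma}$, and there is no general extension theorem of the type you describe for ``varieties of log-canonical type''.

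The paper's argument runs in the opposite logical direction. One \emph{starts} with $\omega\in H^{0}(\bar{X}_{n},\Omega^{k})$, restricts to $X_{n}$, and uses Proposition \ref{prop: hol Leray} (via \eqref{eqn: push OmegaX k>dimB}) to write $\omega|_{X_{n}}$ as a pullback or sum of pullbacks. The content is then to show that the forms being pulled back must themselves extend to the compactifications downstairs, \emph{using} that $\omega$ was holomorphic on all of $\bar{X}_{n}$. For $k\leq\dim B$ this is the standard observation: if $\omega_{B}$ had a pole along some boundary divisor of $\bar{B}$, then $\pi_{n}^{\ast}\omega_{B}=\omega$ would have a pole along its preimage in $\bar{X}_{n}$. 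For $k=\dim B+2m$ the step is genuinely harder: one has $\omega|_{X_{n}}=\sum_{\sigma}\pi_{\sigma}^{\ast}\omega_{\sigma}$, and the holomorphicity of the sum on $\bar{X}_{n}$ does not immediately give holomorphicity of each $\omega_{\sigma}$ on $\bar{X}_{\sigma}$. The paper isolates a single $\omega_{\sigma}$ by restricting $\omega$ to a carefully chosen subvariety $Y\subset\bar{X}_{n}$ (built from a multisection of $\bar{X}_{\tau}\to\bar{B}$ with $\tau$ the complement of $\sigma$) which maps generically finitely onto $\bar{X}_{\sigma}$ but non-dominantly to every other $X_{\sigma'}$; the terms $\pi_{\sigma'}^{\ast}\omega_{\sigma'}|_{Y}$ then vanish, so $\omega|_{Y}=\pi_{\sigma}^{\ast}\omega_{\sigma}|_{Y}$ is holomorphic on $Y$, forcing $\omega_{\sigma}$ to extend to $\bar{X}_{\sigma}$. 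Your proposal does not contain this idea, and without it the case $k=\dim B+2m$ does not go through.
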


\begin{proof}
The assertion in the case $k>\dim B$ with $k\not\equiv \dim B$ mod $2$ follows directly from 
the second case of Proposition \ref{prop: hol Leray}. 
Next we consider the case $k\leq \dim B$. 
We may assume that $\pi_{n}\colon X_{n}\to B$ extends to a surjective morphism $\bar{X}_{n}\to \bar{B}$. 
Let $\omega$ be a holomorphic $k$-form on $\bar{X}_{n}$. 
By the first case of Proposition \ref{prop: hol Leray}, we have 
$\omega|_{X_{n}}=\pi_{n}^{\ast}\omega_{B}$ for a holomorphic $k$-form $\omega_{B}$ on $B$. 
Since $\omega$ is holomorphic over $\bar{X}_{n}$, 
$\omega_{B}$ is holomorphic over $\bar{B}$ as well by a standard property of holomorphic differential forms. 
(Otherwise $\omega$ must have pole at the divisors of $\bar{X}_{n}$ dominating 
the divisors of $\bar{B}$ where $\omega_{B}$ has pole.) 
Therefore the pullback 
$H^{0}(\bar{B}, \Omega^{k})\to H^{0}(\bar{X}_{n}, \Omega^{k})$ 
is surjective. 

Finally, we consider the case $k=\dim B+2m$, $0\leq m \leq n$. 
Let $\omega$ be a holomorphic $k$-form on $\bar{X}_{n}$. 
By \eqref{eqn: push OmegaX k>dimB}, we can uniquely write 
$\omega|_{X_{n}}=\sum_{\sigma}\pi_{\sigma}^{\ast}\omega_{\sigma}$ 
for some canonical forms $\omega_{\sigma}$ on $X_{\sigma}$. 

\begin{claim}
For each $\sigma$, $\omega_{\sigma}$ is holomorphic over $\bar{X}_{\sigma}$. 
\end{claim}

\begin{proof}
We identify $X_{n}$ with the fiber product $X_{\sigma}\times_{B}X_{\tau}$ 
where $\tau=\{ 1, \cdots, n\} - \sigma$ is the coset of $\sigma$. 
We may assume that this fiber product diagram extends to a commutative diagram of surjective morphisms 
\begin{equation*}
\begin{CD}
     \bar{X}_{n} @>{\pi_{\tau}}>> \bar{X}_{\tau} \\
  @V{\pi_{\sigma}}VV    @VV{\pi^{\tau}}V \\
     \bar{X}_{\sigma}   @>{\pi^{\sigma}}>>  \bar{B} 
\end{CD}
\end{equation*}
between smooth projective models. 
We take an irreducible subvariety $\tilde{B}\subset \bar{X}_{\tau}$ such that 
$\tilde{B}\to \bar{B}$ is surjective and generically finite. 
%(namely a geometric point over the function field of $B$). 
Then $\pi_{\tau}^{-1}(\tilde{B})\subset \bar{X}_{n}$ has a unique irreducible component dominating $\tilde{B}$. 
We take its desingularization and denote it by $Y$. 
By construction $\pi_{\sigma}|_{Y} \colon Y\to \bar{X}_{\sigma}$ is dominant (and so surjective) and generically finite. 
On the other hand, for any $\sigma'\ne \sigma$ with $|\sigma'|=m$, 
the projection $\pi_{\sigma'}|_{Y} \colon Y\dashrightarrow X_{\sigma'}$ is not dominant. 
Indeed, such $\sigma'$ contains at least one component $i\in \tau$, 
so if $Y\dashrightarrow X_{\sigma'}$ was dominant, 
then the $i$-th projection $Y\dashrightarrow X$ would be also dominant, 
which is absurd because it factorizes as $Y\to \tilde{B}\subset \bar{X}_{\tau}\dashrightarrow X$. 

We pullback the differential form 
$\omega=\pi_{\sigma}^{\ast}\omega_{\sigma}+\sum_{\sigma'\ne \sigma}\pi_{\sigma'}^{\ast}\omega_{\sigma'}$ 
to $Y$ and denote it by $\omega|_{Y}$. 
Since $\omega$ is holomorphic over $\bar{X}_{n}$, $\omega|_{Y}$ is holomorphic over $Y$. 
Since $\pi_{\sigma'}^{\ast}\omega_{\sigma'}|_{Y}$ is the pullback of the canonical form $\omega_{\sigma'}$ 
on $X_{\sigma'}$ by the non-dominant map $Y \dashrightarrow X_{\sigma'}$, it vanishes identically. 
Hence $\pi_{\sigma}^{\ast}\omega_{\sigma}|_{Y}=\omega|_{Y}$ is holomorphic over $Y$. 
Since $\pi_{\sigma}|_{Y}\colon Y \to \bar{X}_{\sigma}$ is surjective, 
this implies that $\omega_{\sigma}$ is holomorphic over $\bar{X}_{\sigma}$ as before. 
\end{proof}

The above argument will be clear if we consider over the generic point $\eta$ of $B$: 
we restrict $\omega$ to the fiber of $(X_{\eta})^{n}\to (X_{\eta})^{\tau}$ over 
the geometric point $\tilde{B}$ of $(X_{\eta})^{\tau}$ over $\eta$. 

By this claim, the pullback 
\begin{equation*}
(\pi_{\sigma}^{\ast})_{\sigma} : 
\bigoplus_{|\sigma|=m} H^{0}(\bar{X}_{\sigma}, K_{\bar{X}_{\sigma}}) \to H^{0}(\bar{X}_{n}, \Omega^{\dim B + 2m}) 
\end{equation*}
is surjective. 
It is also injective as implied by \eqref{eqn: push OmegaX k>dimB}. 
This proves Proposition \ref{prop: extension}. 
\end{proof}

\subsection{Universal $K3$ surface}\label{ssec: univ K3}

Now we prove Theorem \ref{thm: main}, in the generality of lattice-polarization. 
Let $L$ be an even lattice of signature $(2, d)$ 
which can be embedded as a primitive sublattice of the $K3$ lattice $3U\oplus 2E_{8}$. 
We denote by  
\begin{equation*}
\mathcal{D} = \{ \: {\C} \omega \in {\proj}L_{{\C}} \: | \: (\omega, \omega)=0, (\omega, \bar{\omega})>0 \: \}^{+} 
\end{equation*}
the Hermitian symmetric domain associated to $L$, where $+$ means a connected component. 

Let $\pi\colon X\to B$ be a smooth projective family of $K3$ surfaces over a smooth quasi-projective connected base $B$. 
We say (\cite{Ma2}) that the family $\pi\colon X\to B$ is \textit{lattice-polarized with period lattice} $L$ 
if there exists a sub local system $\Lambda$ of $R^{2}\pi_{\ast}{\Z}$ such that 
each fiber $\Lambda_{b}$ is a hyperbolic sublattice of the N\'eron-Severi lattice $NS(X_{b})$ and 
the fibers of the orthogonal complement $\Lambda^{\perp}$ are isometric to $L$. 
Then we have a period map 
\begin{equation*}
\mathcal{P} : B \to {\G}\backslash \mathcal{D} 
\end{equation*}
for some finite-index subgroup ${\G}$ of ${\rm O}^{+}(L)$. 
By Borel's extension theorem, $\mathcal{P}$ is a morphism of algebraic varieties. 

Let us put the assumption 
\begin{equation}\label{eqn: period map birational}
\mathcal{P} \; \textrm{is birational and} \: -{\rm id}\not\in {\G}. 
\end{equation}
For such a family $\pi\colon X\to B$, if we shrink $B$ as necessary, 
then $\mathcal{P}$ is an open immersion and Condition \ref{condition} is satisfied. 
For example, the universal $K3$ surface $\mathcal{F}_{g,1}\to {\Fg}$ for $g>2$ 
restricted over a Zariski open set of ${\Fg}$ satisfies this assumption with 
$L=L_{g}$ and ${\G}={\Gg}$ (see \S \ref{sec: intro} for these notations). 

As in \S \ref{sec: intro}, we denote by 
$M_{\wedge^{k},k}({\G})$ the space of vector-valued modular forms of weight $(\wedge^{k},k)$ for ${\G}$,  
$S\!_{l}({\G}, \det)$ the space of scalar-valued cusp forms of weight $l$ and character $\det$ for ${\G}$, 
and $\mathcal{S}_{n,m}=\frak{S}_{n}/(\frak{S}_{m}\times \frak{S}_{n-m})$. 

\begin{theorem}\label{thm: main lattice-pol}
Let $\pi\colon X\to B$ be a lattice-polarized $K3$ family with period lattice $L$ of signature $(2, d)$ with $d\geq 3$ 
and monodromy group ${\G}$ satisfying \eqref{eqn: period map birational}. 
%We assume that either $(d, k)\ne (2, 1)$ or $(d, k)=(2, 1)$ but $L$ has Witt index $\leq 1$. 
Then we have an $\frak{S}_{n}$-equivariant isomorphism 
\begin{equation*}
H^{0}(\bar{X}_{n}, \Omega^{k}) \simeq 
\begin{cases}
0 & \: \: 0<k< d/2 \\ 
M_{\wedge^{k},k}({\G}) & \: \: d/2 \leq k < d \\ 
0 & \: \: k>d, \: k-d\not\in 2{\Z} \\ 
S\!_{d+m}({\G}, \det)\otimes {\C}\mathcal{S}_{n,m} & \: \: k=d+2m, \: 0\leq m \leq n 
\end{cases}
\end{equation*}
\end{theorem}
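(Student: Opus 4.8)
The plan is to combine the geometric computation of Proposition~\ref{prop: extension} with the known dictionary between holomorphic forms on orthogonal modular varieties and modular forms; no fundamentally new argument is needed beyond the preceding two subsections. First I would check that the hypotheses apply. Since $\mathcal{P}\colon B\to \G\backslash\mathcal{D}$ is birational and $\dim_{\C}\mathcal{D}=d$, we have $\dim B=d$; moreover, after shrinking $B$ (which alters neither side of the claimed isomorphism, both being birational invariants of smooth projective models) the map $\mathcal{P}$ becomes an open immersion and Condition~\ref{condition} holds, as noted after \eqref{eqn: period map birational}. Thus Proposition~\ref{prop: extension} applies with $\dim B=d$. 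It instantly gives the vanishing in the third case ($k>d$, $k-d\notin 2\Z$) and reduces the remaining cases to computing $H^{0}(\bar{B},\Omega^{k})$ for $k\leq d$ and $\bigoplus_{|\sigma|=m}H^{0}(\bar{X}_{\sigma},K_{\bar{X}_{\sigma}})$ for $k=d+2m$.

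For $k\leq d$ the form lives on the base. Because $B$ is birational to the orthogonal modular variety $\G\backslash\mathcal{D}$ and holomorphic forms are birational invariants, $H^{0}(\bar{B},\Omega^{k})$ is the space of holomorphic $k$-forms on a smooth projective model of $\G\backslash\mathcal{D}$. Here I would invoke the identification from \cite{Pom} and \cite{Ma3}: these forms vanish for $0<k<d/2$ and are canonically isomorphic to the vector-valued modular forms $M_{\wedge^{k},k}(\G)$ for $d/2\leq k<d$. This settles the first two cases, and the $\frak{S}_{n}$-action on them is trivial since they are pulled back from $B$ via $\pi_{n}$, as required.

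For $k=d+2m$ I would use that each $X_{\sigma}\simeq X_{m}$ is again a lattice-polarized $K3$ family over $B$ with the same period lattice $L$ and group $\G$, namely the $m$-fold fiber product. By \cite{Ma1} its canonical forms satisfy $H^{0}(\bar{X}_{\sigma},K_{\bar{X}_{\sigma}})\simeq S_{d+m}(\G,\det)$: the base contributes canonical weight $d$, each of the $m$ relative two-forms $H^{2,0}$ of the $K3$ fibers contributes weight one (being the period line $\mathcal{L}$), and holomorphicity across the boundary of $\bar{X}_{\sigma}$ forces the cusp condition. Summing over the $\binom{n}{m}=|\mathcal{S}_{n,m}|$ subsets of size $m$ yields $S_{d+m}(\G,\det)\otimes\C\mathcal{S}_{n,m}$. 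The $\frak{S}_{n}$-equivariance is then automatic: $\frak{S}_{n}$ permutes the factors, hence the subsets $\sigma$, hence the summands, which is precisely its permutation action on $\mathcal{S}_{n,m}$, and Proposition~\ref{prop: extension} realizes the isomorphism through the pullbacks $\pi_{\sigma}^{\ast}$, which are visibly compatible with this permutation. At the overlap $k=d$ (the case $m=0$) the two descriptions agree, since $X_{\emptyset}=B$ and the cusp condition cuts $S_{d}(\G,\det)$ out of $M_{\det,d}(\G)$.

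The genuine content is already contained in Propositions~\ref{prop: hol Leray} and \ref{prop: extension}; what remains is assembly. Accordingly I expect the main obstacle to be not a hard new step but the bookkeeping of normalizations: verifying that the weight attached in \cite{Ma1} to the $m$-fold fiber product is exactly $d+m$ with determinant character, that the cusp (rather than merely holomorphic) condition is the one produced by extension over $\bar{X}_{\sigma}$, and that the conventions for $M_{\wedge^{k},k}(\G)$ and $S_{d+m}(\G,\det)$ in \cite{Pom}, \cite{Ma3}, \cite{Ma1} are mutually compatible, so that all the pieces glue into the single statement of Theorem~\ref{thm: main lattice-pol}.
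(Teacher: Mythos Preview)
Your proposal is correct and follows essentially the same route as the paper's own proof: reduce via Proposition~\ref{prop: extension}, then invoke \cite{Pom} and \cite{Ma3} for $k<d$ and the canonical-form identification for $k=d+2m$. Two small points worth noting: the paper cites \cite{Ma2} Theorem~3.1 (rather than \cite{Ma1}) for $H^{0}(\bar{X}_{m},K_{\bar{X}_{m}})\simeq S_{d+m}(\G,\det)$ in the lattice-polarized generality, and it explicitly verifies via \eqref{eqn: KXsigma} that the stabilizer $\frak{S}_{m}\times\frak{S}_{n-m}$ of $\sigma$ acts trivially on $H^{0}(\bar{X}_{\sigma},K_{\bar{X}_{\sigma}})$, which is what justifies writing the direct sum as the permutation module $\C\mathcal{S}_{n,m}$ rather than a possibly twisted induced representation.
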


\begin{proof}
When $k\leq d$, we have 
$H^{0}(\bar{X}_{n}, \Omega^{k}) \simeq H^{0}(\bar{B}, \Omega^{k})$ by Proposition \ref{prop: extension}. 
Then $\bar{B}$ is a smooth projective model of the modular variety ${\G}\backslash \mathcal{D}$. 
By a theorem of Pommerening \cite{Pom}, 
the space $H^{0}(\bar{B}, \Omega^{k})$ for $k<d$ is isomorphic to the space of ${\G}$-invariant holomorphic $k$-forms on $\mathcal{D}$, 
which in turn is identified with the space $M_{\wedge^{k},k}({\G})$ 
of vector-valued modular forms of weight $(\wedge^{k},k)$ for ${\G}$ (\cite{Ma3}). 
The vanishing of this space in $0<k<d/2$ is proved in \cite{Ma3} Theorem 1.2 in the case when $L$ has Witt index $2$, 
and in \cite{Ma3} Theorem 1.5 (1) in the case when $L$ has Witt index $\leq 1$. 

The vanishing in the case $k>d$ with $k\not\equiv d$ mod $2$ follows from Proposition \ref{prop: extension}. 
Finally, we consider the case $k=d+2m$, $0\leq m \leq n$. 
By Proposition \ref{prop: extension}, we have a natural $\frak{S}_{n}$-equivariant isomorphism 
\begin{equation*}
H^{0}(\bar{X}_{n}, \Omega^{d+2m}) \simeq 
\bigoplus_{|\sigma|=m} H^{0}(\bar{X}_{\sigma}, K_{\bar{X}_{\sigma}})  
\end{equation*}
where $\frak{S}_{n}$ permutes the subsets $\sigma$ of $\{ 1, \cdots, n \}$. 
Here note that the stabilizer of each $\sigma$ acts on $H^{0}(\bar{X}_{\sigma}, K_{\bar{X}_{\sigma}})$ trivially by \eqref{eqn: KXsigma}. 
Therefore, as an $\frak{S}_{n}$-representation, the right hand side can be written as 
\begin{equation*}
H^{0}(\bar{X}_{m}, K_{\bar{X}_{m}}) \otimes \left( \bigoplus_{|\sigma|=m}{\C}\sigma \right) 
\simeq H^{0}(\bar{X}_{m}, K_{\bar{X}_{m}}) \otimes {\C}\mathcal{S}_{n,m}. 
\end{equation*}
Finally, we have 
$H^{0}(\bar{X}_{m}, K_{\bar{X}_{m}})\simeq S\!_{d+m}({\G}, \det)$ 
by \cite{Ma2} Theorem 3.1. %(which is a generalization of the polarized case in \cite{Ma1}). 
\end{proof}

\begin{remark}
The case $k\geq d$ of Theorem \ref{thm: main lattice-pol} holds also when $d=1, 2$. 
We put the assumption $d\geq 3$ for the requirement of the Koecher principle from \cite{Pom}. 
Therefore, in fact, only the case $(d, k)=(2, 1)$ with Witt index $2$ is not covered. 
\end{remark}

%%%%%%% Reference %%%%%%%%%%%%%%%%%%%%%%%%%%%%%

\end{document}